\DeclareMathAlphabet{\mathsfsl}{OT1}{cmss}{m}{sl}
\newtheorem{thm}{Theorem}[section]
\newtheorem{lem}[thm]{Lemma}
\newtheorem{cor}[thm]{Corollary}
\newtheorem{prop}[thm]{Proposition}
\theoremstyle{definition}
\begin{document}

\title{Null-homotopic knots have Property R}

\author{{\Large Yi NI}\\{\normalsize Department of Mathematics, Caltech, MC 253-37}\\
{\normalsize 1200 E California Blvd, Pasadena, CA
91125}\\{\small\it Email\/:\quad\rm yini@caltech.edu}}

\date{}

\maketitle

\begin{abstract}
We prove that if $K$ is a nontrivial null-homotopic knot in a closed oriented $3$--manfiold $Y$ such that $Y-K$ does not have an $S^1\times S^2$ summand, then the zero surgery on $K$ does not have an $S^1\times S^2$ summand. This generalizes a result of Hom and Lidman, who proved the case when $Y$ is an irreducible rational homology sphere.
\end{abstract}

\

\section{Introduction}

Given a null-homologous knot $K$ in a $3$--manifold $Y$ and a slope $\frac pq\in\mathbb Q\cup\{\infty\}$, let $Y_{p/q}(K)$ be the $\frac pq$--surgery on $K$. Gabai's famous Property R Theorem \cite{G3} asserts, among others, that if $K$ is a nontrivial knot in $S^3$, then $S^3_0(K)$ is irreducible. In particular, $S^3_0(K)$ does not have an $S^1\times S^2$ summand. 

In recently years, many generalizations of this theorem have been proved using Heegaard Floer homology. 
See, for example, the overview in \cite{NiPropG}. Hom and Lidman \cite{HL} proved two generalizations of Property R. One result they proved is, if $K$ is a nontrivial null-homotopic knot in an irreducible rational homology sphere $Y$, then $Y_0(K)$ does not have an $S^1\times S^2$ summand. The goal of this note is to remove the restrictions on the ambient manifold.

\begin{thm}\label{thm:PropR}
Let $Y$ be a closed, oriented, connected $3$--manifold, and $K\subset Y$ be a nontrivial null-homotopic knot such that $Y-K$ does not have an $S^1\times S^2$ summand, then $Y_0(K)$ does not have an $S^1\times S^2$ summand.
\end{thm}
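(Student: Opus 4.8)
The plan is to reduce the statement to a computation in (twisted) Heegaard Floer homology, in the spirit of the Hom--Lidman argument but keeping track of the extra $\spinc$ structures and $H_1$--classes that appear once $b_1(Y)$ is positive. Let $M$ denote the exterior of $K$, a compact $3$--manifold with torus boundary; as $K$ is null-homologous the meridian $\mu$ and $0$--framed longitude $\lambda$ are well defined, with $Y=M(\mu)$ and $Y_0(K)=M(\lambda)$. Let $K^{*}\subset Y_0(K)$ be the dual knot, i.e.\ the core of the filling solid torus $V$, so that $Y_0(K)\setminus K^{*}=M$ and $[K^{*}]$ generates a $\mathbb Z$ direct summand of $H_1(Y_0(K);\mathbb Z)\cong H_1(Y;\mathbb Z)\oplus\mathbb Z$. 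The feature of $K$ being null-homotopic (and not merely null-homologous) that I would use is the isomorphism $\pi_1(Y)\cong\pi_1(Y_0(K))/\langle\langle[K^{*}]\rangle\rangle$, obtained by comparing the two Dehn fillings of $M$.

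First I would dispose of the ``easy'' topological configuration. Suppose $Y_0(K)$ has an $S^1\times S^2$ summand, equivalently a non-separating $2$--sphere $S$. Put $S$ in general position with respect to $V$ and use innermost-disc arguments to make $S\cap V$ a union of meridian discs, so that $S\cap M$ is a planar surface whose boundary consists of parallel copies of $\lambda$. If some non-separating sphere of $Y_0(K)$ can be isotoped off $K^{*}$, then it lies in $M$ and is non-separating there, so $Y\setminus K=M$ has an $S^1\times S^2$ summand, contrary to hypothesis. Hence we may assume every non-separating sphere of $Y_0(K)$ meets $K^{*}$ essentially; in particular, if $Y_0(K)\cong S^1\times S^2$ then $[K^{*}]$ generates $\pi_1\cong\mathbb Z$, so the displayed isomorphism forces $\pi_1(Y)=1$, i.e.\ $Y\cong S^3$ by the Poincar\'e conjecture, and Gabai's theorem (recalled in the Introduction) finishes the case. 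Using that a null-homotopic knot can be isotoped off any essential sphere, one may further assume $Y$ is prime (a summand of $Y$ disjoint from $K$ contributes no $S^1\times S^2$ summand to $Y\setminus K$ and passes unchanged to $Y_0(K)$). Thus it remains to treat the case where $Y_0(K)$ is a connected sum of some $Z$ with no $S^1\times S^2$ summand and $m\ge1$ copies of $S^1\times S^2$, and where $[K^{*}]$ has nonzero component in every $S^1\times S^2$ factor.

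For this remaining case I would turn to Heegaard Floer homology. The input is a twisted-coefficient detection of $S^1\times S^2$ summands: the presence of such a summand forces the twisted $\underline{HF}^{+}(Y_0(K),\mathfrak s)$, in the $\spinc$ structures restricting nontrivially to the summand, to take the degenerate form characteristic of a connected sum with $S^1\times S^2$ (in particular its twisted $HF^{\infty}$ collapses in the $[K^{*}]$--direction). On the other hand, the integer surgery (mapping cone) formula computes this same twisted invariant of $Y_0(K)=M(\lambda)$ from $\underline{CFK}^{\infty}(Y,K)$, and the term controlling the extremal $\spinc$ structures --- those pairing maximally with the capped Seifert surface --- is $\widehat{HFK}(Y,K,g)$, where $g=g(K)$. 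By the genus-detection theorem for knot Floer homology this group is nonzero, and since $K$ is nontrivial it is not the unknot in a ball, so no minimal-genus Seifert surface is a disc and $g\ge1$. This non-vanishing in a nonzero Alexander grading is incompatible with the collapse forced by the $S^1\times S^2$ summand, yielding the contradiction. The hypothesis on $Y\setminus K$ reenters here too: through the sutured-Floer description of $M$ (with two meridional sutures) it guarantees that the relevant twisted invariant of the exterior does not vanish, which is exactly what prevents the mapping cone from collapsing prematurely.

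I expect the main difficulty to be this last step: carrying the twisted coefficient system and the $\Lambda^{*}H^{1}$--module structure through the surgery formula when $b_1(Y)>0$, so that the many torsion and non-torsion $\spinc$ structures of $Y_0(K)$ and the position of $[K^{*}]$ relative to the $S^1\times S^2$ summands are all controlled simultaneously --- precisely the ingredient that is automatic in the Hom--Lidman setting $b_1(Y)=0$. The topological reductions above serve mainly to isolate the single configuration (the dual knot linking every $S^1\times S^2$ summand) in which the Floer machinery has to do real work.
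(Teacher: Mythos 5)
Your proposal takes a genuinely different route from the paper --- a direct attack via twisted coefficients and the surgery formula, closer in spirit to \cite{NiNSSphere} --- whereas the paper splits into two cases: for $b_1(Y)>0$ it uses Lackenby's strengthening of Gabai's sutured-manifold theorem to show every sphere in $Y_0(K)$ bounds a rational homology ball, and for $b_1(Y)=0$ it constructs a degree-one map $Y_0(K)\to Y$, proves $\pi_1(Z)\cong\pi_1(Y)$ when $Y_0(K)=Z\#(S^1\times S^2)$ (via the Kurosh subgroup theorem and the $\pi_1$-injectivity result of \cite{DLVVW}), and then invokes \cite{HL} together with the Geometrization-based fact that $\pi_1$ determines $\mathrm{rank}\,\widehat{HF}$. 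Measured against either route, your argument has two genuine gaps. The first is the assertion that a null-homotopic knot can be \emph{isotoped} off any essential sphere: null-homotopy only gives a homotopy of $K$ into a ball, and there are null-homotopic knots in reducible manifolds whose exteriors are irreducible, in which case $K$ meets every essential sphere of $Y$ in every isotopy representative. So you cannot reduce to $Y$ prime. The legitimate reduction is to $Y\setminus K$ irreducible, after which $Y$ may still be a nontrivial connected sum; coping with that is the entire content of Section~\ref{sect:DegreeOne} of the paper, and your later Floer argument would have to carry the full prime decomposition of $Y$ along with it.

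The second and more serious gap is the step you yourself flag as ``the main difficulty,'' which is not a technical detail but the crux, and which I do not believe goes through as described. The degeneration forced by an $S^1\times S^2$ summand is seen by the $\mathrm{Spin}^c$ structures (or twisting directions) that pair nontrivially with the class $[S]$ of the non-separating sphere, while the non-vanishing of $\widehat{HFK}(Y,K,g)$ controls the $\mathrm{Spin}^c$ structures extremal with respect to the capped Seifert class $[\widehat F]$. Once $b_1(Y)>0$, and even after your reductions, $[S]$ and $[\widehat F]$ are in general independent directions in $H_2(Y_0(K))$; the condition that $S$ cannot be isotoped off $K^{*}$ does not force any algebraic interaction between them, so the ``collapse'' and the ``non-vanishing'' can coexist in disjoint parts of the $\mathrm{Spin}^c$ decomposition and produce no contradiction. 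This is precisely why a direct surgery-formula argument yields Property-G-type statements about the specific class $[\widehat F]$ (as in Corollary~\ref{cor:Genus1}) rather than the full absence of an $S^1\times S^2$ summand, and why the paper instead routes through $\pi_1(Z)\cong\pi_1(Y)$ and a rank comparison of $\widehat{HF}$. As written, the proposal is a plausible research plan whose central step is asserted rather than proved, and whose naive form appears to fail.
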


In Gabai's work \cite{G3}, it is proved that $S^3_0(K)$ remembers the information of $K$ about the genus and fiberedness. Motivated by this result, a concept ``Property G'' was introduced in \cite{NiNSSphere} as a generalization of Property R. Known results on Property G are summarized in \cite{NiPropG}. We will not give the complete definition of Property G here. Instead, we just state the explicit result for genus--$1$ null-homotopic knot.

\begin{cor}\label{cor:Genus1}
Let $K\subset Y$ be a genus--$1$ null-homotopic knot, then $K$ has Property G. That is, if $F$ is a genus--$1$ Seifert surface bounded by $K$, and $\widehat{F}\subset Y_0(K)$ is the torus obtained by capping off $\partial F$ with a disk, then $[\widehat F]\in H_2(Y_0(K))$ is not represented by a sphere. Moreover, if $Y_0(K)$ is a torus bundle over $S^1$ with fiber $\widehat{F}$, then $K$ is a fibered knot with fiber $F$.
\end{cor}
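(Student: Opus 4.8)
\emph{Idea of proof.}
The corollary bundles together two statements about a genus--$1$ null-homotopic knot $K\subset Y$ with genus--$1$ Seifert surface $F$: (a) the class $[\widehat{F}]\in H_2(Y_0(K))$ is not represented by an embedded sphere, and (b) if $Y_0(K)$ is an $\widehat{F}$--bundle over $S^1$ then $Y-N(K)$ is an $F$--bundle over $S^1$. The plan is to deduce (a) from Theorem~\ref{thm:PropR} after reducing to the case handled there, and to prove (b) by a direct cut-and-reglue argument.

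\emph{Part (a).} Since $K$ is null-homotopic it is null-homologous, so a meridian $\mu$ of $K$ generates a $\mathbb{Z}$ summand of $H_1(Y_0(K))$ (the Seifert longitude dies, as it bounds $F$), and the intersection pairing gives $\langle[\mu],[\widehat{F}]\rangle=\pm1$; in particular $[\widehat{F}]\neq0$. Hence an embedded sphere representing $[\widehat{F}]$ would be non-separating, so $Y_0(K)$ would split off an $S^1\times S^2$ summand. If $Y-K$ has no $S^1\times S^2$ summand this contradicts Theorem~\ref{thm:PropR}; in general one reduces to that case. Suppose $Y-N(K)=Z\#(S^1\times S^2)$, with $\Sigma\subset Y-N(K)$ a non-separating $2$--sphere realizing the summand. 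Because compression does not change the homology class of a surface, and (a) only concerns the class $[\widehat{F}]$, one is free to compress $F$; I claim this makes $F\cap\Sigma=\emptyset$. For a circle of $F\cap\Sigma$ innermost on $\Sigma$, it bounds a disk $\delta\subset\Sigma$ with $\mathrm{int}\,\delta\cap F=\emptyset$; if the circle were essential in $F$, then since $g(F)=1$ compressing $F$ along $\delta$ would produce a disk bounded by $K$, which is impossible; so the circle is inessential in $F$, a compression along $\delta$ lowers $|F\cap\Sigma|$, and one iterates. Then $F\subset Z$, so $F$ is a genus--$1$ Seifert surface for $K$ in $Y':=Z\cup N(K)$; also $K$ remains null-homotopic in $Y'$ (from $Y=Y'\#(S^1\times S^2)$ the retraction $\pi_1(Y)\cong\pi_1(Y')*\mathbb{Z}\to\pi_1(Y')$ carries $[K]$ to $[K]$, which therefore vanishes in $\pi_1(Y')$), and $Y_0(K)=Y'_0(K)\#(S^1\times S^2)$. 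Iterating over the $S^1\times S^2$ summands of the complement reduces us to the case that $Y-N(K)$ has no such summand; a sphere representing $[\widehat{F}]$ in $Y_0(K)$ is then pushed, by innermost-disk moves on the finitely many reducing spheres, into the $S^1\times S^2$--summand-free piece, where Theorem~\ref{thm:PropR} forbids it.

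\emph{Part (b).} Write $Y_0(K)=(Y-N(K))\cup V$ with $V$ the surgery solid torus and $D\subset V$ its meridian disk, so $\widehat{F}=F\cup D$. Cutting $Y_0(K)$ open along $\widehat{F}$ cuts $Y-N(K)$ along $F$, yielding the sutured complement $M_F:=(Y-N(K))\Bslash F$, and cuts $V$ along $D$, yielding a $3$--ball; reassembling, $Y_0(K)\Bslash\widehat{F}$ is obtained from $M_F$ by attaching a $2$--handle $h$ along the annulus of $\partial M_F$ lying over $\partial N(K)$ — an annulus that separates the two copies $F_{+},F_{-}$ of $F$ in $\partial M_F$, whose boundary circles $h$ caps off. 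Now assume $Y_0(K)$ is an $\widehat{F}$--bundle over $S^1$; then $Y_0(K)\Bslash\widehat{F}\cong\widehat{F}\times I$ and, inside this product, $h$ has co-core a properly embedded arc joining the two torus boundary components. Every such arc of $T^2\times I$ is isotopic to a vertical arc, so deleting $h$ recovers $M_F\cong F\times I$, and tracking $F_{\pm}$ shows this is an isomorphism of sutured manifolds; hence $Y-N(K)$ is an $F$--bundle over $S^1$, i.e.\ $K$ is a fibered knot with fiber $F$.

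The only essential input is Theorem~\ref{thm:PropR}, used once in Part (a); the rest is standard $3$--manifold topology — innermost-disk surgeries in the reduction, and the triviality of a properly embedded arc of $T^2\times I$ meeting both boundary components (equivalently, that attaching a trivial $2$--handle to a product sutured manifold returns a product). I expect the reduction in Part~(a) — arranging that $Y-K$ has no $S^1\times S^2$ summand so that Theorem~\ref{thm:PropR} applies without circularity — to be the main point to get right.
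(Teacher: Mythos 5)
Your Part (a) is essentially the paper's argument: the paper simply observes that the first statement follows from Theorem~\ref{thm:PropR} (a sphere representing the nonzero class $[\widehat F]$ would be non-separating and hence produce an $S^1\times S^2$ summand of $Y_0(K)$). The reduction you supply to handle $S^1\times S^2$ summands of $Y-K$ is left implicit in the paper and is a reasonable thing to spell out, although the last step ("pushed, by innermost-disk moves, into the summand-free piece") deserves a word of care, since an innermost-disk surgery splits the sphere into two spheres whose classes only sum to $[\widehat F]$; one should argue via the pairing with the meridian that the relevant piece still forces a non-separating sphere in $Y'_0(K)$.

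Part (b), however, contains a fatal error. The assertion "every properly embedded arc of $T^2\times I$ joining the two boundary components is isotopic to a vertical arc" is false: take a vertical arc and replace a small subarc inside a ball by a locally knotted tangle. The complement of such an arc has fundamental group an amalgamated product involving a nontrivial knot group, so it is not homeomorphic to $(\text{once-punctured torus})\times I$, and the arc is not isotopic to a vertical one. Your reduction of fiberedness to this claim is correct — $M_F$ is indeed the complement of the cocore arc of the $2$--handle in $Y_0(K)\Bslash\widehat F\cong T^2\times I$ — but that reduction shows the corollary is \emph{equivalent} to the statement that this particular arc is unknotted, which is precisely the content one must prove; it is not automatic. (A sanity check: your argument for (b) never uses that $K$ is null-homotopic, and an elementary argument of this kind would already reprove the hard genus--$1$ case of Gabai's theorem in $S^3$.) The paper's actual route is entirely different: it uses the degree-one map $f:Y_0(K)\to Y$ from Lemma~\ref{lem:Retraction}, a group-theoretic analysis (Kurosh subgroup theorem, normality of $f_*(\pi_1(\widehat F))$, and Wang's theorem on maps between aspherical $3$--manifolds) to show $Y$ must be $S^1\times S^2$, $\mathbb{RP}^3\#\mathbb{RP}^3$, or a spherical manifold, and then invokes the twisted Heegaard Floer homology fiberedness criterion of Ai--Ni for those targets. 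Some such nontrivial input is unavoidable, and your proposal does not supply it.
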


Corollary~\ref{cor:Genus1} answers the genus--$1$ case of a question of Boileau 
\cite[Problem~1.80C]{KirbyList}. There are easy counterexamples to the original question of Boileau, so one should modify the question to add the condition on the fiber of the zero surgery. See \cite{NiFibredSurg} for more details.

The strategy of the proof of Theorem~\ref{thm:PropR} is as follows. If $b_1(Y)>0$, the theorem easily follows from a result of Lackenby \cite{Lackenby} and Gabai \cite{G2}. If $b_1(Y)=0$, we use results about degree-one maps and a result in \cite{DLVVW} to show that if $Y_0(K)=Z\#(S^1\times S^2)$ then $\pi_1(Z)\cong\pi_1(Y)$. Theorem~\ref{thm:PropR} then follows from work of Hom and Lidman \cite{HL}.

We will use the following notations. If $N$ is a submanifold of a manifold $M$, let $\nu(N)$ be a closed tubular neighborhood of $N$, and let $\nu^{\circ}(N)$ be the interior of $\nu(N)$. If $X,Y$ are two spaces, $f:X\to Y$ is a continuous map, let $f_*:\pi_1(X)\to \pi_1(Y)$ be the induced map. We will always suppress the base point in the notation when we talk about fundamental groups. 

This paper is organized as follows. In Section~\ref{sect:DegreeOne}, we prove general results about degree-one maps with certain properties on the induced homomorphisms on $\pi_1$. In Section~\ref{sect:ZeroSurg}, we prove that if the zero surgery on a knot in $Y$ is $Z\#(S^1\times S^2)$, then $\pi_1(Z)\cong\pi_1(Y)$. In Section~\ref{sect:Main}, we use work of Lackenby \cite{Lackenby} and Hom--Lidman \cite{HL} to prove Theorem~\ref{thm:PropR}. Corollary~\ref{cor:Genus1} is also proved as an application of this theorem.

\

\noindent{\bf Acknowledgements.} This
research was funded by NSF grant numbers DMS-1252992 and DMS-1811900. We are graterful to Tye Lidman for very helpful comments on an earlier draft of this paper.


\section{Degree-one maps which induce surface-group injective homomorphisms}\label{sect:DegreeOne}

In this section, we will prove results about degree-one maps which induce surface-group injective homomorphisms on $\pi_1$.

A group $\Gamma$ is a {\it surface group} if it is isomorphic to the fundamental group of a closed orientable surface.
Let $\varphi: G\to H$ be a group homomorphism. We say $\varphi$ is {\it surface-group injective}, if the restriction of $\varphi$ to every surface subgroup of $G$ is injective. 

\begin{lem}\label{lem:SurfInj}
Let $\varphi: G_1*G_2\to H$ be a group homomorphism.
If both $\varphi|_{G_1}$ and $\varphi|_{G_2}$ are surface-group injective, then $\varphi$ is also surface-group injective.
\end{lem}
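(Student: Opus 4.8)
The plan is to use the Kurosh subgroup theorem to show that every surface subgroup of the free product $G_1*G_2$ is, up to conjugacy, contained in $G_1$ or in $G_2$, after which the statement follows immediately from the hypotheses.

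First I would dispose of the trivial surface group (genus $0$), for which injectivity is vacuous. So let $S \le G_1*G_2$ be a nontrivial surface subgroup; then $S$ is isomorphic either to $\mathbb Z^2$ or to the fundamental group of a closed orientable surface of genus at least $2$. I would record two elementary facts about such a group: it is freely indecomposable (it is infinite abelian in the first case, and one-ended in the second, e.g.\ by Stallings' ends theorem, or because it is a $2$--dimensional Poincar\'e duality group), and it is not free (for instance $H^2(S;\mathbb Z)\cong\mathbb Z\ne 0$, whereas a free group has cohomological dimension at most $1$).

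Next I would apply the Kurosh subgroup theorem to $S \le G_1*G_2$, writing $S = F * \bigl(\ast_{i\in I}\, S_i\bigr)$ with $F$ free and $S_i = S \cap g_i G_{k(i)} g_i^{-1}$ for suitable $g_i \in G_1*G_2$ and $k(i)\in\{1,2\}$. Free indecomposability forces this decomposition to have at most one nontrivial factor, and non-freeness of $S$ rules out that factor being $F$; hence $F$ is trivial and $S = S\cap gG_kg^{-1}$ for a single $g\in G_1*G_2$ and $k\in\{1,2\}$, i.e.\ $S \subseteq gG_kg^{-1}$. Consequently $T := g^{-1}Sg$ is a subgroup of $G_k$ isomorphic to the surface group $S$, hence a surface subgroup of $G_k$, and the hypothesis that $\varphi|_{G_k}$ is surface-group injective gives that $\varphi|_T$ is injective.

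Finally I would transfer injectivity back to $S$ by a conjugation computation: if $s\in S$ and $\varphi(s)=1$, then $\varphi(g^{-1}sg)=\varphi(g)^{-1}\varphi(s)\varphi(g)=1$, and since $g^{-1}sg\in T$ and $\varphi|_T$ is injective, $g^{-1}sg=1$, so $s=1$. Thus $\varphi|_S$ is injective. The argument is essentially formal, and I do not anticipate a real obstacle; the only points needing care are the correct invocation of the Kurosh subgroup theorem and the standard fact that a nontrivial surface group is both freely indecomposable and not free.
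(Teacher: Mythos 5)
Your proposal is correct and follows essentially the same route as the paper: apply the Kurosh subgroup theorem, use that a surface group is freely indecomposable (and not free) to conclude it is conjugate into one of the factors, and then invoke the hypothesis. You merely spell out the details (genus $0$, non-freeness, the conjugation step) that the paper leaves implicit.
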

\begin{proof}
Let $\Gamma$ be a surface subgroup of $G_1*G_2$.
By the Kurosh Subgroup Theorem \cite[Theorem~8.3]{Hempel}, $\Gamma$ is the free product of a free group and conjugates of subgroups of $G_i$, $i=1,2$. Since $\Gamma$ is not a nontrivial free product, it is conjugated to a subgroup of $G_i$ for some $i$. Since $\varphi|_{G_i}$ is surface-group injective, $\varphi$ is also injective on $\Gamma$.
\end{proof}

The importance of the concept of surface-group injective maps is illustrated by the next lemma.

\begin{lem}\label{lem:Compress}
Let $X,Y$ be closed, oriented, connected $3$--manifolds, $f:X\to Y$ be a surjective map such that $f_*$ is surface-group injective. Let $S\subset Y$ be a separating $2$--sphere, and assume that $R=f^{-1}(S)$ is a closed, oriented, connected surface. Then there exists a separating $2$--sphere $E\subset X$ so that $R$ is obtained by adding tubes to $E$.
\end{lem}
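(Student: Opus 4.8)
The plan is to analyze the surface $R = f^{-1}(S)$ directly, using the fact that $S$ bounds a ball on each side to pull back a bounding structure, and then to use the surface-group injectivity of $f_*$ to kill the genus of $R$ via compressions. First I would arrange $f$ to be transverse to $S$ (which the hypothesis that $R$ is a closed surface already presumes), so $Y = Y_1 \cup_S Y_2$ with each $Y_i$ a punctured $3$--manifold, and in fact since $S$ is a separating $2$--sphere, each $Y_i$ is closed after capping with a ball. Set $X_i = f^{-1}(Y_i)$, a compact $3$--manifold with $\partial X_i = R$; then $X = X_1 \cup_R X_2$. The idea is to compress $R$ inside $X$ until it becomes a sphere (or a union of spheres), and then argue that the result is a single separating sphere $E$ from which $R$ is recovered by tubing.

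The main tool is that any essential simple closed curve on $R$ that bounds a disk in $X_1$ or $X_2$ can be used to compress $R$; and conversely, if $R$ is incompressible in both $X_1$ and $X_2$, then by the loop theorem / Dehn's lemma applied in the standard way $R$ would be $\pi_1$--injective in $X$, i.e.\ $\pi_1(R) \to \pi_1(X)$ is injective — but then, chasing the diagram $\pi_1(R) \to \pi_1(X) \to \pi_1(Y)$, the composite is $f_* $ restricted to $\pi_1(R)$, and its image lies in $\pi_1(S) = 1$ since $R$ maps to $S$. Wait — more carefully: $f(R) = S$, so the composite $\pi_1(R) \to \pi_1(X) \xrightarrow{f_*} \pi_1(Y)$ factors through $\pi_1(S) = 1$, hence is trivial; since $f_*$ is surface-group injective and $\pi_1(R)$ is a surface group, injectivity of $f_*$ on $\pi_1(R)$ would force $\pi_1(R) = 1$, i.e.\ $R$ is already a sphere. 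So either $R$ is a sphere, or it admits a compressing disk on one side. I would then induct on the genus of $R$: compressing $R$ along a disk $D \subset X_i$ produces a new surface $R'$ (of smaller genus, or splitting off a sphere) together with the inverse operation of tubing; the compression can be realized ambiently so that $R$ is obtained from $R' \sqcup (\partial N(D))$ by an annulus/tube sum. Pushing this induction to the end yields a collection of spheres in $X$ from which $R$ is reconstructed by adding tubes.

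The remaining point is to see that this collection can be taken to be a \emph{single} separating sphere $E$: tubing spheres together in an oriented $3$--manifold, the tubes can be absorbed so that all the genus is carried by tubes attached to one sphere, and the other spheres, being separating $2$--spheres in $X$, can be tubed into it as well (a tube between two separating spheres yields again a separating surface); one checks separability is preserved at each step because $[R] = f^*[S] = 0$ in $H_2(X;\mathbb{Z})$, so the final surface, and hence the underlying sphere $E$, is null-homologous and therefore separating. Thus $R$ is obtained by adding tubes to a separating sphere $E$, as desired.

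The step I expect to be the main obstacle is making the compression/tubing bookkeeping precise — specifically, realizing each compression \emph{ambiently} so that at every stage the current surface is genuinely obtained from a sphere by adding tubes, and controlling what happens when a compression disconnects the surface. Handling the interplay between the two sides $X_1, X_2$ (a compressing disk may exist only on one side, and after compressing, the decomposition of $X$ along the new surface changes) requires care, as does the final consolidation of several spheres into one. The homological input $[R] = 0$ in $H_2(X)$, inherited from $[S] = 0$ in $H_2(Y)$ via $f$ having degree one, is what guarantees $E$ is separating rather than merely a sphere.
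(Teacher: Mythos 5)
Your proposal follows essentially the same route as the paper's proof: the composite $\pi_1(R)\to\pi_1(X)\xrightarrow{f_*}\pi_1(Y)$ is trivial because $f(R)=S$, so surface-group injectivity forces $R$ to be a sphere or compressible; one compresses down to spheres, reassembles $R$ by adding tubes (tubing the spheres along a tree first to get a single sphere $E$), and concludes that $E$ is separating because compression does not change the homology class. Two small slips that do not affect the argument: the lemma assumes only that $f$ is surjective, not degree one --- the paper instead observes directly that $R$ separates the nonempty preimages $f^{-1}(y_1)$, $f^{-1}(y_2)$ of points on either side of $S$, and in any case $[R]=0$ already follows from $[S]=0$ --- and the individual spheres produced by the compressions need not each be separating, although the single sphere $E$ is, since $[E]=[R]=0$.
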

\begin{proof}
Let $\iota: R\to X$ be the inclusion map. Since $f(R)=S$ is a sphere, \[\iota_*(\pi_1(R))\subset\ker f_*.\]
If $R$ is not a sphere, since $f_*$ is surface-group injective, $R$ must be compressible. Let $R'$ be the surface obtained by compressing $R$, then $R$ can be obtained from $R'$ by adding a tube. Let $R_1'$ be a component of $R'$,
let $\iota': R_1'\to X$ be the inclusion map, then 
\[
\iota'_*(\pi_1(R_1'))=\iota_*(\pi_1(R_1'\cap R))\subset\iota_*(\pi_1(R))\subset\ker f_*.
\]
If $R'$ does not consist of spheres, since $f_*$ is surface-group injective, $R'$ must be compressible. So $R'$ can be obtained from another surface $R''$ by adding a tube. 

Continue with the above process, we conclude that $R$ can be obtained from some spheres by adding tubes. We can rearrange the order of the tubes, so that some tubes connecting different spheres are added first to get a single sphere $E$, then $R$ is obtained by adding other tubes to $E$. 

Let $y_1,y_2\in Y$ be two points separated by $S$. Since $f$ is surjective, both $f^{-1}(y_1)$ and $f^{-1}(y_2)$ are non-empty. These two sets are clearly separated by $R$, so $R$ is separating. The process of compressing a surface does not change the homology class of the surface, hence
$E$ is also separating.
\end{proof}

In the rest of this section, let $Y$ be a $3$--manifold which has no $S^1\times S^2$ summand, $S_1,S_2,\dots,S_n$ be a collection of disjoint $2$--spheres in $Y$ satisfying the following conditions: $Y\setminus (\cup_{i=1}^n S_i)$ has $n+1$ components whose closures are $\check Y_1,\check Y_2,\dots,\check Y_n,\check Y_{n+1}$, where $\check Y_{n+1}$ is $S^3$ with $n$ open balls removed, and a closed irreducible manifold $Y_i\ne S^3$ can be obtained from $\check Y_i$ by capping off $\partial\check Y_i=S_i$ with a ball $B_i$, $1\le i\le n$. Then
\[
Y=\#_{i=1}^nY_i.
\]
When $Y$ is irreducible, it is understood that $n=0$.

\begin{prop}\label{prop:gMap}
Let $X$ be a closed, oriented, connected $3$--manifold, and $f:X\to Y$ be a degree-one map such that $f_*$ is surface-group injective. Then there exists a degree-one map $g: X\to Y$ satisfying $g_*=f_*$, and each $E_i=g^{-1}(S_i)$ is a $2$--sphere.
\end{prop}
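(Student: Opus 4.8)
The strategy is to homotope $f$ so as to progressively simplify the preimage of the union $S:=\bigcup_{i=1}^n S_i$. Since any homotopy of $f$ leaves both the degree and the induced homomorphism $f_*$ unchanged, it suffices to produce, through homotopies of $f$, a map $g$ for which every $g^{-1}(S_i)$ is a single $2$--sphere. After a preliminary homotopy I would make $f$ transverse to $S$, so that $R:=f^{-1}(S)=\bigsqcup_{i=1}^n R_i$ with each $R_i=f^{-1}(S_i)$ a closed orientable surface, and I would record the natural partition of the components of $X\setminus R$ according to which piece $\check Y_j$ of $Y\setminus S$ each of them maps into.

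The first reduction removes all genus. Each component of $R$ maps into one of the $2$--spheres $S_i$, so its $\pi_1$ maps into $\ker f_*$ under inclusion; exactly as in the proof of Lemma~\ref{lem:Compress}, surface-group injectivity of $f_*$ then forces every positive-genus component of $R$ to be compressible in $X$, and an innermost-disk argument produces a compressing disk $D$ with interior disjoint from $R$, so that $f(D)$ lies in a single $\check Y_j$. Compressing along $D$ and adjusting $f$ only in a regular neighborhood of $D$ (which lies over that single $\check Y_j$) is realized by a homotopy of $f$ and strictly lowers the total genus of $R$; repeating, I may assume every component of $R$ is a $2$--sphere. The second reduction merges spheres, and here the degree-one hypothesis enters: capping off the (spherical) boundary of each component $P$ of $X\setminus R$ and extending $f$ over the caps gives a map from a closed manifold onto $Y_j$ (if $P$ is $\check Y_j$--colored, $j\le n$) or onto $S^3$ (if $\check Y_{n+1}$--colored), and for each fixed colour the degrees of these maps sum to $1$. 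Combining this count with the irreducibility of the $Y_i$ (so that $\pi_2 Y_i=0$, which makes the low-degree pieces removable) and once more with surface-group injectivity of $f_*$, one shows that as long as some $R_i$ is disconnected there is a homotopy of $f$ — pushing an inessential spherical component across $S_i$, or amalgamating two spherical components along a product region — that decreases the number of components of $R$. When no such move remains, each $R_i$ is connected, hence a single $2$--sphere $E_i$, and the resulting map $g$ is homotopic to $f$, so $g_*=f_*$ and $g^{-1}(S_i)=E_i$ as required.

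I expect the main difficulty to lie in this second reduction: one must verify that a reducing move \emph{always} exists whenever some $R_i$ fails to be connected, and — both here and in the compression step — that each move is an honest homotopy of $f$, not merely a surgery performed on $R$. This is precisely the place where the degree-one condition (controlling how the pieces of $X\setminus R$ distribute over the prime summands of $Y$) and the irreducibility of the $Y_i$ are indispensable; by contrast, the transversality and genus-reduction steps are routine given Lemma~\ref{lem:Compress}.
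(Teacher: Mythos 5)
Your overall plan --- simplify $f^{-1}(\bigcup S_i)$ until each $f^{-1}(S_i)$ is one sphere --- is the right shape, but the proposal has a genuine gap exactly where you locate the ``main difficulty,'' and you offer no argument there. The second reduction (making each $f^{-1}(S_i)$ connected) is not something that follows from a degree count plus irreducibility in a few lines; it is essentially the content of Rong--Wang's theorem on preimages of submanifolds \cite{RongWang}, which the paper invokes as a black box (\emph{before}, not after, dealing with genus). Asserting that ``one shows that a reducing move always exists'' is naming the theorem you need, not proving it. Moreover, your insistence that every move be an honest homotopy of $f$ creates obstructions you do not address even in the step you call routine: to realize a compression of $R_i=f^{-1}(S_i)$ along a disk $D$ by a homotopy supported near $D$, one must homotope $f|_D$, rel a neighborhood of $\partial D$, to a map into $S_i$ inside the adjacent piece $\check Y_j$. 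The obstruction lives in $\pi_2(\check Y_j)$, and $\check Y_j$ is \emph{not} aspherical even though $Y_j$ is irreducible: it is $Y_j$ with balls removed, so $\pi_2(\check Y_j)$ is generated as a $\mathbb{Z}[\pi_1(Y_j)]$-module by the boundary spheres and is typically infinitely generated as an abelian group. Killing this obstruction by modifying the capping disk only changes it by integer multiples of $[S_i]$, not by translates $\gamma\cdot[S_i]$, so the move can genuinely fail as a homotopy (or force new preimage components), and your argument does not engage with this.

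The paper sidesteps both problems by noticing that the conclusion only requires $g_*=f_*$ and $\deg g=1$, not $g\simeq f$. It inducts on $n$: first homotope $f$ so that $f^{-1}(S_1)$ is connected (citing \cite{RongWang}); then Lemma~\ref{lem:Compress} shows this connected preimage is a sphere $E_1$ with tubes attached. Rather than trying to undo the tubes by a homotopy, the paper cuts $X$ along $E_1$, extends the restrictions of $f$ over the capping balls to degree-one maps $f_i\colon X_i\to Y_i$, and reglues them to a map $f_0\# f_1$ with $(f_0\# f_1)^{-1}(S_1)=E_1$. This new map is in general \emph{not} homotopic to $f$, so the paper must (and does) verify $(f_0\# f_1)_*=f_*$ directly, using that $\pi_1$ of the complement of the tube handlebody surjects onto $\pi_1(X)$. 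If you want to salvage your approach, you should either import Rong--Wang's theorem explicitly for the connectivity step and confront the $\pi_2$ obstructions in the compression step, or drop the requirement that your moves be homotopies and instead track $g_*$ and $\deg g$ by hand as the paper does.
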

\begin{proof}
We induct on $n$. When $n=0$, there is nothing to prove. So we assume $n>0$ and the result is proved for $n-1$.

Using \cite[Theorem~1.1]{RongWang}, we may assume $R_1=f^{-1}(S_1)$ is a connected surface. Let $\check Y_0=\overline{Y\setminus\check Y_1}$, and let $Y_0$ be obtained by capping off $\partial \check Y_0$ with a ball $B_0$. Let $U_i=f^{-1}(\check Y_i)$, $i=0,1$.
Then $\partial U_1=\partial U_0=R_1$. 

By Lemma~\ref{lem:Compress}, there exists a separating $2$--sphere $E_1\subset X$, so that $R_1$ is obtained by adding tubes to $E_1$. Now $E_1$ splits $X$ into two parts $\check X_1,\check X_0$, so that $\check X_i$ can be obtained from $U_i$ by adding $1$--handles and digging tunnels, $i=0,1$. Let $X_i$ be the closed manifold obtained by capping off $\partial\check X_i$ with a ball, $i=0,1$. 

We claim that each map $f|_{U_i}:U_i\to \check Y_i$ can be extended to a degree-one map $f_i: X_i\to Y_i$. In fact, the manifold $X_i$ can be obtained from $U_i$ by gluing a $3$--manifold $V_i$ which is obtained from $B^3$ by digging tunnels and adding $1$--handles. Since $B_i$ is a ball which is contractible, we can extend $f|_{U_i}:U_i\to \check Y_i$ to a map $f_i: X_i\to Y_i$ by sending $V_i$ to $B_i$. The degree of $f_i$ is $1$ since the degree of $f|_{U_i}$ is $1$.

Since $\deg f_i=1$, after a homotopy supported in $V_i$, we may assume there exists a ball $B'_i\subset\mathrm{int}(B_i)$, such that $f_i$ sends $B^{\star}_i=f_i^{-1}(B'_i)$ homeomorphically onto $B_i'$. Now we can glue $X_i\setminus\mathrm{int}(B_i^{\star})$, $i=0,1$, together along their boundary, to get back \[X=(X_0\setminus\mathrm{int}(B_0^{\star}))\cup_{S^2} (X_1\setminus\mathrm{int}(B_1^{\star})),\] and define a map 
\[f_0\#f_1:X\to Y=(Y_0\setminus\mathrm{int}(B_0'))\cup_{S^2} (Y_1\setminus\mathrm{int}(B_1'))\] by gluing the restrictions of $f_0,f_1$. We rename \[S_1=\partial (Y_0\setminus\mathrm{int}(B_0')), E_1=\partial (X_0\setminus\mathrm{int}(B_0^{\star})),\] then $E_1=(f_0\#f_1)^{-1}(S_1)$.

We claim that $(f_0\#f_1)_*=f_*$. Let $D\subset E_1$ be a disk such that all tubes in $S_1$ are added to the interior of $D$, and let $D^c=\overline{E_1\setminus D}$. Let $V\subset X$ be the handlebody obtained by adding the $1$--handles bounded by the tubes to $\nu(D)$. Since $\pi_1(V)$ is a quotient of $\pi_1(\partial V)$, the map $X\setminus V\to X$ induces a surjective map on $\pi_1$. To prove $(f_0\#f_1)_*=f_*$, we only need to prove that $(f_0\#f_1)_*(\alpha)=f_*(\alpha)$ when $\alpha$ is a homotopy class represented by a loop in $X\setminus V$. We observe that $f_0\#f_1=f$ on $X\setminus(V\cup\nu(D^c))$, and $\pi_1(X\setminus V)$ is the free product of the $\pi_1$ of the two components of $X\setminus(V\cup\nu(D^c))$, so $(f_0\#f_1)_*(\alpha)=f_*(\alpha)$.

By the induction hypothesis, there exists a map $g_1:X_1\to Y_1$, such that $g_1^{-1}(S_i)$ is a sphere for $2\le i\le n$, and $(g_1)_*=(f_1)_*$. We can define a map $g=f_0\#g_1$ in a similar way as $f_0\#f_1$, then $g^{-1}(S_i)$ is a sphere for $1\le i\le n$, and
\[
g_*=(f_0\#g_1)_*=(f_0\#f_1)_*=f_*.
\]
This finishes the induction step.
\end{proof}


\section{Zero surgery on a null-homotopic knot}\label{sect:ZeroSurg}

The goal of this section is to prove the following proposition.

\begin{prop}\label{prop:Pi1Isom}
Let $Y$ be a closed, connected, oriented $3$--manifold which does not have an $S^1\times S^2$ summand, and $K\subset Y$ be a null-homotopic knot. If $Y_0(K)=Z\#(S^1\times S^2)$, then $\pi_1(Z)\cong\pi_1(Y)$.
\end{prop}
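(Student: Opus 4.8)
The plan is to reduce the statement to an equality of normal closures and then re-inject enough $3$--manifold topology to establish it. Write $M=Y\setminus\ond(K)$ for the knot exterior and let $\mu,\lambda\subset\partial M$ be the meridian and the $0$--framed longitude of $K$, so that $\pi_1(Y)=\pi_1(M)/\langle\langle\mu\rangle\rangle$ while $\pi_1(Y_0(K))=\pi_1(M)/\langle\langle\lambda\rangle\rangle$, with the core $c$ of the surgery solid torus being the image of $\mu$. Since $K$ is null-homotopic, $\lambda$ lies in the normal closure of $\mu$ in $\pi_1(M)$, so $\langle\langle\mu,\lambda\rangle\rangle=\langle\langle\mu\rangle\rangle$ and
\[
\pi_1(Y_0(K))\big/\langle\langle c\rangle\rangle\;=\;\pi_1(M)\big/\langle\langle\mu,\lambda\rangle\rangle\;=\;\pi_1(M)\big/\langle\langle\mu\rangle\rangle\;=\;\pi_1(Y).
\]
Writing $\pi_1(Y_0(K))=\pi_1(Z)*\mathbb Z\langle t\rangle$ for the free splitting coming from the non-separating sphere of the $S^1\times S^2$ summand, the Proposition becomes the assertion that $\langle\langle c\rangle\rangle=\langle\langle t\rangle\rangle$.

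Next I would construct a degree-one map $f\co Y_0(K)\to Y$ that restricts to (a map homotopic to) the identity on $M$. This amounts to extending the inclusion $\partial M=T^2\hookrightarrow Y$ over the surgery solid torus $V$. Both $\mu$ and $\lambda$ are null-homotopic in $Y$ — the first bounds a meridian disk of $K$, the second is freely homotopic to $K$ — so $T^2\hookrightarrow Y$ is trivial on $\pi_1$ and lifts to the universal cover $\widetilde Y$; after extending over the meridian disk of $V$, the only remaining obstruction is the class in $\pi_2(Y)\cong H_2(\widetilde Y)$ represented by a lift $\widetilde{T^2}$ of $T^2$, and this vanishes because $K$ null-homotopic forces $\nu(K)$ to lift to disjoint solid tori in $\widetilde Y$, one of which is bounded by $\widetilde{T^2}$. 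One then checks $\deg f=1$, and since $\pi_1(Y_0(K))$ is generated by the image of $\pi_1(M)$, the homomorphism $f_*$ is exactly the quotient map in the display; in particular $\ker f_*=\langle\langle c\rangle\rangle$ and $f(c)$ is freely homotopic to the (null-homotopic) meridian of $K$ in $Y$.

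It remains to upgrade $\pi_1(Y_0(K))/\langle\langle c\rangle\rangle\cong\pi_1(Y)$ to $\pi_1(Z)\cong\pi_1(Y)$, i.e.\ to prove $\langle\langle c\rangle\rangle=\langle\langle t\rangle\rangle$; this is the heart of the matter. Two cases are immediate: if $K$ bounds an embedded disk then $Y_0(K)=Y\#(S^1\times S^2)$ and $Z=Y$; and if $K$ lies in a ball then $Y_0(K)=Y\#S^3_0(K')$ for some $K'\subset S^3$, whereupon Gabai's Property R theorem \cite{G3} and uniqueness of prime decompositions force $K'$ to be trivial, so again $Z=Y$. In general, I would feed $f$ into the machinery of Section~\ref{sect:DegreeOne}: after checking that $f_*$ is surface-group injective, Proposition~\ref{prop:gMap} lets me replace $f$ by a degree-one map whose preimages of the connecting spheres of $Y$ are again spheres, so that the $S^1\times S^2$ summand of $Y_0(K)$ is confined to a single summand of this decomposition and is compared to a single irreducible prime of $Y$; then the result of \cite{DLVVW} is precisely what is needed to identify $\langle\langle c\rangle\rangle$ with $\langle\langle t\rangle\rangle$. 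Together with the display this yields $\pi_1(Z)=\pi_1(Y_0(K))/\langle\langle t\rangle\rangle\cong\pi_1(Y)$.

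The last step is the main obstacle, and it genuinely needs $3$--manifold input: as an abstract group statement it is false, since killing the normal closure of an element of $A*\mathbb Z$ with $\mathbb Z$--exponent sum $\pm1$ need not return $A$ (one can land on a Baumslag--Solitar group, which is not even a closed $3$--manifold group), so one must exploit that $c$ is the core of a surgery solid torus rather than an arbitrary such element. Verifying the surface-group injectivity of $f_*$ required before Proposition~\ref{prop:gMap} can be applied is a secondary technical point to handle along the way.
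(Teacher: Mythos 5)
Your overall architecture matches the paper's: the same degree-one map $f\co Y_0(K)\to Y$ (the paper gets it more cleanly as the restriction of a retraction of the $2$--handle cobordism, but your obstruction-theoretic construction is the standard one), then surface-group injectivity, Proposition~\ref{prop:gMap}, and the ribbon-cobordism input of \cite{DLVVW}. The problem is that the decisive step is asserted rather than proved, and your reformulation makes it harder than it needs to be. You reduce the Proposition to the equality of normal closures $\langle\langle c\rangle\rangle=\langle\langle t\rangle\rangle$ in $\pi_1(Y_0(K))$. That equality is sufficient but not necessary, and in the hardest case the paper does not prove it: it only produces an \emph{abstract} isomorphism $\pi_1(Z)\cong\pi_1(Y)$. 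Concretely, after Proposition~\ref{prop:gMap} the $S^1\times S^2$ prime summand of $Y_0(K)$ lies over a single piece $\check Y_i$. If it lies over the punctured $S^3$ piece $\check Y_{n+1}$, then $g_*(t)$ is null-homotopic, so $t\in\ker f_*=\langle\langle c\rangle\rangle$ and $f_*|_{\pi_1(Z)}$ is surjective; injectivity is Lemma~\ref{lem:ZInj}, and your scheme goes through. But if it lies over an irreducible summand $Y_1$, there is no reason for $g_*(t)$ to vanish; the paper instead caps off the summand using $\pi_2(Y_1)=0$ to get a degree-one map $h_1\co Z_1\to Y_1$, proves $\pi_1(Z_1)\cong\pi_1(Y_1)$ and $\pi_1(X_0)\cong\pi_1(Y_0)$ separately, and assembles $\pi_1(Z)\cong\pi_1(Y)$ as a free product of these --- an isomorphism not induced by $f$. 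Your appeal to \cite{DLVVW} at this point does not fill the hole: that result gives only that $\pi_1(Z)\to\pi_1(W)\cong\pi_1(Y)$ is injective, i.e.\ that $\langle\langle c\rangle\rangle$ meets the free factor $\pi_1(\check Z)$ trivially, which is the injectivity half; it says nothing about $t\in\langle\langle c\rangle\rangle$ or about the reverse inclusion $\langle\langle c\rangle\rangle\subseteq\langle\langle t\rangle\rangle$, and neither inclusion is established anywhere in your argument.

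A secondary point of ordering: the surface-group injectivity of $f_*$, which you must have \emph{before} invoking Proposition~\ref{prop:gMap}, is not an independent technicality to be "handled along the way" --- it is itself a consequence of the \cite{DLVVW} injectivity (Lemma~\ref{lem:ZInj}: $W$ turned upside down is a ribbon cobordism from $Z$, so $\pi_1(\check Z)$ injects) combined with the Kurosh subgroup theorem applied to $\pi_1(Z)*\mathbb Z$. So the ribbon-cobordism input must be brought in at the start, not saved for the end. With that reordering, and with the two-case analysis after Proposition~\ref{prop:gMap} actually carried out (including the $\pi_2(Y_1)=0$ capping argument in the second case), your outline becomes the paper's proof; as written, the heart of the argument is missing.
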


We first prove a general result about surgery on null-homotopic knots.

\begin{lem}\label{lem:Retraction}
Let $Y$ be a closed, oriented, connected $3$--manifold, and $K\subset Y$ be a null-homotopic knot. 
Let $V$ be the $2$--handle cobordism from $Y$ to $Y_m(K)$ for some integer $m$. Then there exists a retraction $p:V\to Y$, so that $p|_{Y_m(K)}:Y_m(K)\to Y$ is a degree-one map.
\end{lem}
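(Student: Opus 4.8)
The plan is to construct the retraction $p$ by hand. Write the cobordism as $V=(Y\times[0,1])\cup_\psi H$, where $H=D^2\times D^2$ is the $2$--handle and $\psi\co S^1\times D^2\to Y\times\{1\}$ is an embedding onto the tubular neighborhood $\nu(K)$ with $\psi(S^1\times\{0\})=K$; the integer $m$ is encoded in $\psi$ but will play no role. On the product part $Y\times[0,1]$ I would simply let $p$ be the projection to $Y$. This already restricts to the identity on $Y=Y\times\{0\}$, so the only task is to extend $p$ over the handle $H$ in a way compatible with the projection along the gluing region; on $S^1\times D^2\subset\partial H$ the projection forces $p=\iota\circ\psi$, where $\iota\co\nu(K)\hookrightarrow Y$ is the inclusion.

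The crux is to extend $\iota\circ\psi\co S^1\times D^2\to Y$ over all of $D^2\times D^2$. This is where the hypothesis enters: since $K$ is null-homotopic, the inclusion $K\hookrightarrow Y$ extends to a map $g\co D^2\to Y$, which I may arrange to agree with $\psi(\,\cdot\,,0)$ on $\partial D^2=S^1$. Now $D^2\times D^2$ strongly deformation retracts onto $A:=(S^1\times D^2)\cup(D^2\times\{0\})$: with a suitable product CW structure $A$ is a subcomplex, and a straightforward computation shows $A$ is simply connected with vanishing reduced homology, hence contractible, so the inclusion $A\hookrightarrow D^2\times D^2$ is a homotopy equivalence of CW complexes and therefore the inclusion of a strong deformation retract. (Alternatively, one applies the homotopy extension property of the cofibration $S^1\times D^2\hookrightarrow D^2\times D^2$ to the homotopy from $\iota\circ\psi$ to the map that factors through $S^1\times\{0\}$.) I then define a map $A\to Y$ to be $\iota\circ\psi$ on $S^1\times D^2$ and $g$ on $D^2\times\{0\}$ --- these agree on the overlap $S^1\times\{0\}$ by the choice of $g$ --- and compose with the retraction $D^2\times D^2\to A$ to obtain an extension $q\co H\to Y$ with $q|_{S^1\times D^2}=\iota\circ\psi$. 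Gluing the projection on $Y\times[0,1]$ to $q$ on $H$ gives a well-defined continuous map $p\co V\to Y$ which restricts to the identity on $Y$, i.e.\ a retraction.

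It remains to identify the degree of $p|_{Y_m(K)}$. First I would observe that $p$ restricts to a well-defined continuous map $Y_m(K)\to Y$: the surgered manifold is $(Y\times\{1\}\setminus\nu^{\circ}(K))\cup(D^2\times S^1)$ with $D^2\times S^1\subset\partial H$ the belt region, and the two pieces of $p$ agree along $\partial\nu(K)$ because the deformation retraction is the identity on $A\supset\partial\nu(K)$. Now $p\co V\to Y$ is a map from a compact $4$--manifold to a closed oriented $3$--manifold, and the fundamental class $[\partial V]$ maps to $0$ in $H_3(V)$, so $(p|_{\partial V})_*[\partial V]=0$ in $H_3(Y)$. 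Since $\partial V=(-Y)\sqcup Y_m(K)$ and $p|_Y=\mathrm{id}_Y$ has degree $1$, this forces $p|_{Y_m(K)}$ to have degree $1$ with respect to the orientation on $Y_m(K)$ induced by the cobordism.

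The main obstacle is the second step: producing an extension of the vertical projection over the $2$--handle that restricts correctly to $\iota\circ\psi$ on the attaching region. This is exactly where null-homotopy is needed --- the obstruction to extending the projection across the core of the handle is the class of $K$ in $\pi_1(Y)$ --- and once the deformation retraction of $D^2\times D^2$ onto $(S^1\times D^2)\cup(D^2\times\{0\})$ is in hand the rest is bookkeeping.
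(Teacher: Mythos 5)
Your proof is correct and follows essentially the same route as the paper: the deformation retraction of $D^2\times D^2$ onto $(S^1\times D^2)\cup(D^2\times\{0\})$ that you construct is exactly the statement that $V$ deformation retracts rel $Y$ onto $Y$ with a $2$--cell attached along $K$, and the null-homotopy is used in the same way to extend the identity of $Y$ over that cell. Your degree computation via $[\partial V]\mapsto 0$ in $H_3(V)$ is a slightly more explicit phrasing of the paper's ``isomorphism on $H_3$'' argument, but it is the same homological fact.
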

\begin{proof}
The cobordism $V$ deformation retracts to the space $V'$ obtained from $Y$ by adding a $2$--cell $e^2$ along $K$. Since $K$ is null-homotopic, the identity map on $Y$ can be extended over $e^2$. Hence we have a retraction $V'\to Y$, which implies the existence of the retraction $p:V\to Y$.
The degree of the restriction $p|_{Y_m(K)}:Y_m(K)\to Y$ is $1$ since it induces an isomorphism on $H_3$. 
\end{proof}

The existence of the above degree-one map is a well-known result. See \cite[Proposition~3.2]{BW} and \cite{Gadgil}. 

In the rest of this section, let $V$ be the $2$--handle cobordism from $Y$ to $X=Y_0(K)$, $p:V\to Y$
be the retraction in Lemma~\ref{lem:Retraction}, and $f=p|_{Y_0(K)}$. Moreover, we assume $Y_0(K)=Z\#(S^1\times S^2)$. Let $\check Z$ be the submanifold of $Y_0(K)$ which is $Z$ with a ball removed. Since $Y_0(K)=Z\#(S^1\times S^2)$, we can add a $3$--handle to $V$ to get a cobordism $W:Y\to Z$. 

\begin{lem}\label{lem:ZInj}
The restriction of $f_*$ to $\pi_1(\check Z)$ is injective.
\end{lem}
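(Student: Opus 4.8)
The plan is to exploit the structure of $W: Y \to Z$ as a cobordism with one $2$-handle and one $3$-handle, together with the fact that the retraction $p$ (hence $f$) kills the core of the surgery. First I would set up the algebraic picture. The knot $K$ being null-homotopic means that, writing $N = Y \setminus \ond(K)$ and $\mu$ for the meridian, the zero-surgered manifold $Y_0(K)$ is obtained by gluing a solid torus to $N$ along the longitude $\lambda$; since $K$ is null-homotopic, $\mu$ normally generates a free $\mathbb{Z}$ factor in a suitable sense, and in fact $\pi_1(Y_0(K)) \cong \pi_1(N)/\langle\!\langle \lambda \rangle\!\rangle$ where the longitude $\lambda$ is itself null-homotopic in $Y$ (being null-homologous on the Seifert surface, pushed into $Y$). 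The retraction $p: V \to Y$ restricts on $Y_0(K)$ to a map $f$ that, on $\pi_1$, is the composition $\pi_1(Y_0(K)) = \pi_1(N)/\langle\!\langle \lambda\rangle\!\rangle \to \pi_1(N)/\langle\!\langle \mu, \lambda\rangle\!\rangle \cong \pi_1(Y)$, i.e. $f_*$ is the quotient by the normal closure of the surgery dual $\mu'$ (the core of the reglued solid torus).

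Next I would locate $\check Z$ inside $Y_0(K)$ concretely. Since $Y_0(K) = Z \# (S^1\times S^2)$, decompose $Y_0(K)$ along a separating sphere $S_0$ into $\check Z$ and $\check P$, where $\check P = (S^1\times S^2) \setminus (\text{ball})$. I would want to arrange, via the sphere decomposition, that the surgery dual circle $\mu'$ — which carries the $S^1\times S^2$ summand, because adding the $3$-handle to $V$ caps off the nonseparating sphere dual to $\mu'$ — lies entirely in the $\check P$ side, so that $\check Z$ sits inside $N = Y\setminus\ond(K)$ disjointly from $\mu'$. Concretely: the nonseparating sphere $\widehat F' \subset Y_0(K)$ that we cap with the $3$-handle meets $\mu'$ once; after a connect-sum decomposition one can take $S_0$ disjoint from both, with $\mu'$ on the $\check P$ side. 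Then the inclusion $\check Z \hookrightarrow Y_0(K)$ factors through $N \subset Y$ (i.e. $\check Z \subset N$ and the inclusion-induced map $\pi_1(\check Z)\to\pi_1(N)\to\pi_1(Y)$ agrees with $f_*|_{\pi_1(\check Z)}$, since on $N$ the map $f$ is homotopic to the inclusion $N\hookrightarrow Y$).

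The heart of the argument is then: the composite $\pi_1(\check Z) \to \pi_1(N) \to \pi_1(Y) = \pi_1(N)/\langle\!\langle\mu'\rangle\!\rangle$ is injective. This is where I expect the main obstacle to be, and it is exactly the kind of statement that follows from a "disk" or "handle addition" argument: the kernel of $\pi_1(N)\to\pi_1(Y)$ is normally generated by $\mu'$, and $\mu'$ is a boundary-parallel curve (a meridian) that gets capped by a $2$-handle; meanwhile $\check Z$ is a component of $Y_0(K)$ cut along a sphere $S_0$ that we chose disjoint from the reglued solid torus. One clean way to finish: the $3$-handle cobordism $W: Y \to Z$ gives $\pi_1(Z) = \pi_1(Y)/\langle\!\langle[\text{core of } 2\text{-handle}]\rangle\!\rangle$, but actually since the $2$-handle is attached along the null-homotopic $K$ and then a $3$-handle is attached, $\pi_1(W) \cong \pi_1(Y)$ and the inclusion $Z \hookrightarrow W$ induces a surjection $\pi_1(Z) \twoheadrightarrow \pi_1(Y)$ whose composite with $\pi_1(\check Z)\to\pi_1(Z)$ is $f_*|_{\pi_1(\check Z)}$; so it suffices to show $\pi_1(\check Z) \to \pi_1(Z)$ is injective, which holds because $\check Z$ is obtained from $Z$ by removing a ball (so $\pi_1(\check Z) = \pi_1(Z)$ by general position, as $\dim = 3 \ge 3$), and then $\pi_1(Z)\to\pi_1(Y)$ is injective because the natural surjection $\pi_1(Y_0(K)) = \pi_1(Z)\ast\mathbb{Z} \to \pi_1(Z)$ followed by... — here I would instead argue directly that $f_*: \pi_1(Y_0(K)) \to \pi_1(Y)$ restricted to the $\pi_1(Z)$ free factor is injective, using that $f_*$ kills only the $\mathbb{Z} = \pi_1(S^1\times S^2\setminus\text{ball})$ free factor. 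Indeed, $f_*$ is surjective with kernel normally generated by $\mu'$; since $\mu'$ can be taken to be a generator of the free $\mathbb{Z}$ factor corresponding to $S^1\times S^2$ (the nonseparating sphere $\widehat F'$ being the one capped by the $3$-handle shows $\mu'$ is exactly the $S^1$ of the $S^1\times S^2$ summand), Grushko/Kurosh gives $\pi_1(Y) \cong \pi_1(Y_0(K))/\langle\!\langle\mu'\rangle\!\rangle \cong \pi_1(Z)$, and the composite $\pi_1(Z) = \pi_1(\check Z) \hookrightarrow \pi_1(Y_0(K)) \twoheadrightarrow \pi_1(Y)$ is an isomorphism onto, hence $f_*|_{\pi_1(\check Z)}$ is injective. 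The delicate point to nail down carefully is the identification of $\mu'$ with the free $\mathbb{Z}$-generator of the $S^1\times S^2$ summand, i.e. that the $3$-handle we add to $V$ is dual to (a sphere meeting) $\mu'$ — this is where one uses that $Y_0(K) = Z\#(S^1\times S^2)$ forces a nonseparating sphere, that $b_1$ jumped by one under the surgery, and that $\mu'$ generates $H_1$ of the surgered manifold modulo $H_1(N)$-image.
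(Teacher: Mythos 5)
Your reduction of the problem is fine up to a point: the kernel of $f_*$ is indeed the normal closure of the dual knot $\mu'$ in $\pi_1(Y_0(K))\cong\pi_1(Z)*\mathbb Z$, so the lemma amounts to showing that $\pi_1(Z)$ injects into $(\pi_1(Z)*\mathbb Z)/\langle\!\langle\mu'\rangle\!\rangle$. But the step where you discharge this --- ``$\mu'$ can be taken to be a generator of the free $\mathbb Z$ factor corresponding to $S^1\times S^2$'' --- is a genuine gap, and it is essentially the whole content of the lemma. Nothing in the setup lets you isotope $\mu'$ into the punctured $S^1\times S^2$ or identify it with $S^1\times\{\mathrm{point}\}$: the $3$--handle is attached along $\{\mathrm{point}\}\times S^2$, which has no a priori geometric relation to $\mu'$, and homologically $[\mu']$ is dual to the capped Seifert surface $\widehat F$, not to the nonseparating sphere of the $S^1\times S^2$ summand. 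Knowing (at best) that $[\mu']$ has nonzero image in $H_1(S^1\times S^2)$ is far weaker than knowing $\mu'$ is conjugate to the free generator; for a general word $w\in G*\mathbb Z$, whether $G$ injects into $(G*\mathbb Z)/\langle\!\langle w\rangle\!\rangle$ is exactly the Kervaire--Laudenbach circle of problems, and the known affirmative answer (Gerstenhaber--Rothaus) requires both nonzero exponent sum in the $\mathbb Z$ factor and residual finiteness of $G$ (which for $3$--manifold groups rests on geometrization). You invoke neither.

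This is precisely why the paper routes the argument through the cobordism $W:Y\to Z$ and \cite[Proposition~2.1]{DLVVW}: turned upside down, $W$ is built from $Z\times I$ by attaching only a $1$--handle and a $2$--handle (a ribbon cobordism), and that proposition --- whose proof is the Gerstenhaber--Rothaus argument --- gives injectivity of $\pi_1(Z)\to\pi_1(W)\cong\pi_1(V)\cong\pi_1(Y)$ directly, with no need to locate $\mu'$ inside the free product decomposition. Note also that your endpoint, $\pi_1(Z)\cong\pi_1(Y)$, is Proposition~\ref{prop:Pi1Isom} itself, which the paper only obtains later and with additional work (the surface-group injectivity machinery of Section~\ref{sect:DegreeOne}); obtaining it as a byproduct of this lemma should have been a warning sign that a nontrivial input was being assumed. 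To repair your argument you would need to either cite the ribbon cobordism result as the paper does, or prove the exponent-sum claim for $\mu'$ and invoke Gerstenhaber--Rothaus together with residual finiteness of $\pi_1(Z)$ explicitly.
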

\begin{proof}
Consider the following commutative diagram, where all maps except $p$ are inclusions:
\[
\xymatrix{
Y\ar@/^/[rd]^{\iota_Y} &Y_0(K)\ar[d]^{\iota_0} &\check Z\ar[l]\ar[r]\ar[ld]^{\iota_{\check Z}} &Z\ar[ld]^{\iota_Z}\\
&V\ar@/^/[lu]^{p}\ar[r] &W&
}.
\]
Then $f=p\circ\iota_0$.

Since the $2$--handle in $V$ is added along the null-homotopic knot $K$, the inclusion $\iota_Y:Y\to V$ induces an isomorphism on $\pi_1$. Since $p\circ\iota_Y=\mathrm{id}_Y$, $p_*$ is an isomorphism.

Since $W$ is obtained from $V$ by adding a $3$--handle, the inclusion
$V\subset W$ induces an isomorphism on $\pi_1$.
We have the commutative diagram
\[
\xymatrix{
 &\pi_1(\check Z)\ar[r]^{\cong}\ar[ld]_{(\iota_{\check Z})_*} &\pi_1(Z)\ar[ld]^{(\iota_Z)_*}\\
\pi_1(V)\ar[r]^{\cong} &\pi_1(W)&
}.
\]
The manifold $W$ (after being turned up-side-down) can be obtained from $Z\times I$ by adding a $1$--handle and a $2$--handle, and the $2$--handle cobordism is exactly $V$ being turned up-side-down. 
By \cite[Proposition~2.1]{DLVVW}, $(\iota_Z)_*$ is injective, so $(\iota_{\check Z})_*$ is also injective.

Now consider the commutative diagram
\[
\xymatrix{
\pi_1(Y) &\pi_1(Y_0(K))\ar[d]^{(\iota_0)_*} &\pi_1(\check Z)\ar[l]\ar[ld]^{(\iota_{\check Z})_*} \\
&\pi_1(V)\ar[lu]^{p_*} &
}.
\]
The restriction of $f_*$ to $\pi_1(\check Z)$ is just $p_*\circ(\iota_{\check Z})_*$, which is injective since $(\iota_{\check Z})_*$ is injective and $p_*$ is an isomorphism.
\end{proof}

\begin{cor}
The induced map $f_*:\pi_1(Y_0(K))\to \pi_1(Y)$ is surface-group injective.
\end{cor}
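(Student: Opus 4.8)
The plan is to deduce this immediately from Lemma~\ref{lem:ZInj} together with the free-product decomposition coming from the connected sum $Y_0(K)=Z\#(S^1\times S^2)$, and then to use Lemma~\ref{lem:SurfInj} to assemble surface-group injectivity on the free factors into surface-group injectivity on the whole group.

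First I would record the group-theoretic setup. Removing an open ball from the oriented $3$--manifold $Z$ does not change the fundamental group, so $\pi_1(\check Z)\cong\pi_1(Z)$; and van Kampen applied to the decomposition $Y_0(K)=Z\#(S^1\times S^2)$ gives
\[
\pi_1(Y_0(K))\cong\pi_1(Z)*\pi_1(S^1\times S^2)\cong\pi_1(\check Z)*\mathbb{Z},
\]
where the inclusion $\check Z\hookrightarrow Y_0(K)$ realizes the first free factor.

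Then I would apply Lemma~\ref{lem:SurfInj} with $G_1=\pi_1(\check Z)$, $G_2=\mathbb{Z}$, and $\varphi=f_*$. By Lemma~\ref{lem:ZInj}, the restriction $f_*|_{\pi_1(\check Z)}$ is injective, hence in particular surface-group injective. For the second factor, every subgroup of $\mathbb{Z}$ is either trivial or infinite cyclic; since no infinite cyclic group is isomorphic to the fundamental group of a closed orientable surface, the only surface subgroup of $\mathbb{Z}$ is the trivial one, on which $f_*$ is vacuously injective, so $f_*|_{\mathbb{Z}}$ is surface-group injective. Lemma~\ref{lem:SurfInj} then yields that $f_*$ is surface-group injective on all of $\pi_1(Y_0(K))$.

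There is essentially no obstacle here beyond this bookkeeping, since the real content was already extracted in Lemma~\ref{lem:ZInj}. The one point I would be careful about is that the free-product splitting used is the one compatible with the inclusion of $\check Z$, so that the hypothesis of Lemma~\ref{lem:SurfInj} — surface-group injectivity of the restrictions to the literal free factors of the domain — is precisely what Lemma~\ref{lem:ZInj} provides.
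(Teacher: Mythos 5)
Your proof is correct and is exactly the argument the paper intends: the paper's own proof is the one-line "This follows from Lemmas~\ref{lem:SurfInj} and~\ref{lem:ZInj}," and you have simply filled in the bookkeeping (the splitting $\pi_1(Y_0(K))\cong\pi_1(\check Z)*\mathbb Z$ and the vacuous surface-group injectivity on the $\mathbb Z$ factor) correctly.
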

\begin{proof}
This follows from Lemmas~\ref{lem:SurfInj} and~\ref{lem:ZInj}.
\end{proof}

\begin{proof}[Proof of Proposition~\ref{prop:Pi1Isom}]
We will use the notations in Section~\ref{sect:DegreeOne}. Since $f_*$ is surface-group injective, we can apply Proposition~\ref{prop:gMap} to get a degree-one map $g:X=Y_0(K)\to Y$ so that $g_*=f_*$ and $g^{-1}(S_i)=E_i$ is a separating sphere whenever $1\le i\le n$. 

Since $X$ has an $S^1\times S^2$ summand, by the uniqueness part of the Kneser--Milnor theorem, one component of $X\setminus(\cup_{i=1}^n E_i)$ has an $S^1\times S^2$ summand. 

If the $S^1\times S^2$ summand is in $g^{-1}(\check Y_{n+1})$, then $g_*(S^1\times\{point\})$ is null-homotopic. It follows that 
\[g_*(\pi_1(X)=g_*(\pi_1(Z\#(S^1\times S^2)))=g_*(\pi_1(Z)).\]
Since $\deg g=1$, $g_*$ is surjective. So $g_*|_{\pi_1(Z)}$ is surjective. Our result follows from Lemma~\ref{lem:ZInj} since $g_*=f_*$.

If $\check X_i=g^{-1}(\check Y_i)$ has an $S^1\times S^2$ summand for some $i$ satisfying $1\le i\le n$, without loss of generality, we may assume $i=1$. The map $g|_{\check X_1}$ extends to a map $g_1:X_1\to Y_1$.
Suppose that $X_1=Z_1\#(S^1\times S^2)$, let $P\subset X_1$ be $\{\mathrm{point}\}\times S^2$. Then $X_{1}\setminus \nu^{\circ}(P)$ is homeomorphic to $Z_1$ with two open balls removed. Since  $\pi_2(Y_1)=0$, $(g_1)|_P$ is null-homotopic in $Y_1$. We can then extend  $g_1|_{X_1\setminus \nu^{\circ}(P)}$ to a map $h_1: Z_1\to Y_1$. The new map $h_1$ is again a degree-one map, so $(h_1)_*$ is surjective. 

Using Lemma~\ref{lem:ZInj}, we see that $g_*$ is injective on $\pi_1(X_0\#Z_1)$. (Recall that $X_0$ is obtained from $X$ by replacing $\check X_1$ with a ball.) In particular, $(h_1)_*$ is injective, so \[\pi_1(Z_1)\cong\pi_1(Y_1).\]
We also get that $g_*$ is injective on $\pi_1(X_0)$.  Since $g|_{\check X_0}:\check X_0\to\check Y_0$ is a degree-one proper map, $(g|_{\check X_0})*$ is surjective. So
\[
\pi_1(X_0)\cong \pi_1(Y_0).
\]

Since $Z\cong X_0\#Z_1$, $Y=Y_0\#Y_1$,
we have
\[
\pi_1(Z)\cong\pi_1(X_0)*\pi_1(Z_1)\cong\pi_1(Y_0)*\pi_1(Y_1)\cong\pi_1(Y).\qedhere
\]
\end{proof}


\section{Proof of the main theorem}\label{sect:Main}

In this section, we will prove Theorem~\ref{thm:PropR} and Corollary~\ref{cor:Genus1}.

\begin{proof}[Proof of Theorem~\ref{thm:PropR} when $b_1(Y)>0$]
Without loss of generality, we may assume $M=Y\setminus\nu^{\circ}(K)$ is irreducible. Since $b_1(Y)>0$, there exists a closed, oriented, connected surface $S$ in the interior of $M$, so that $S$ is taut in $M$. Notice that for the $\infty$ slope on $K$, the core of the surgery solid torus, which is $K$, is null-homotopic. Using \cite[Theorem~A.21]{Lackenby}, which is a stronger version of the main result in \cite{G2}, we conclude that each $2$--sphere in $Y_0(K)$ bounds a rational homology ball. Hence $Y_0(K)$ does not have an $S^1\times S^2$ summand.
\end{proof}

\begin{prop}\label{prop:SameRank}
Let $Y_1,Y_2$ be two closed, oriented, connected $3$--manifolds. If $\pi_1(Y_1)\cong\pi_2(Y_2)$, then \[\mathrm{rank}\widehat{HF}(Y_1)=\mathrm{rank}\widehat{HF}(Y_2).\]
\end{prop}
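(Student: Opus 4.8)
The plan is to reduce to the case of irreducible prime summands, where the fundamental group determines the rank of $\widehat{HF}$ via the Geometrization Theorem.

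First I would use the prime decompositions
\[
Y_1=\Big(\#_{i=1}^{k}P_i\Big)\#\big(\#^{r}(S^1\times S^2)\big),\qquad
Y_2=\Big(\#_{j=1}^{l}Q_j\Big)\#\big(\#^{s}(S^1\times S^2)\big),
\]
where each $P_i$ and $Q_j$ is closed, oriented, irreducible, and not $S^3$. Then $\pi_1(Y_1)\cong\pi_1(P_1)*\cdots*\pi_1(P_k)*F_r$, with $F_r$ free of rank $r$, and likewise for $Y_2$. Each $\pi_1(P_i)$ is nontrivial (else $P_i\cong S^3$ by the Poincar\'e Conjecture), is not infinite cyclic (the only closed oriented $3$--manifold with fundamental group $\mathbb Z$ is $S^1\times S^2$, which is reducible), and is freely indecomposable (a nontrivial free splitting of $\pi_1(P_i)$ would, by the Kneser conjecture \cite{Hempel}, exhibit $P_i$ as a connected sum of $3$--manifolds none of which is $S^3$, contradicting irreducibility); in particular $\pi_1(P_i)$ is not a free group. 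By the uniqueness of the Grushko decomposition of a finitely generated group (see, e.g., \cite{Hempel}), the isomorphism $\pi_1(Y_1)\cong\pi_1(Y_2)$ forces $r=s$, $k=l$, and, after reindexing, $\pi_1(P_i)\cong\pi_1(Q_i)$ for every $i$. Since the rank of $\widehat{HF}$ is multiplicative under connected sum (the Ozsv\'ath--Szab\'o connected sum formula together with the K\"unneth theorem) and $\mathrm{rank}\,\widehat{HF}(S^1\times S^2)=2$, it now suffices to prove that $\mathrm{rank}\,\widehat{HF}(P)=\mathrm{rank}\,\widehat{HF}(Q)$ whenever $P$ and $Q$ are closed, oriented, irreducible $3$--manifolds other than $S^3$ with $\pi_1(P)\cong\pi_1(Q)$.

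I would then split into two cases. If $\pi_1(P)$ is finite, then by the Elliptization Theorem $P$ and $Q$ are spherical space forms, hence L--spaces by the work of Ozsv\'ath and Szab\'o, so $\mathrm{rank}\,\widehat{HF}(P)=|H_1(P;\mathbb Z)|$; this number depends only on the abelianization of $\pi_1(P)$, hence only on $\pi_1(P)\cong\pi_1(Q)$, and likewise equals $\mathrm{rank}\,\widehat{HF}(Q)$. If $\pi_1(P)$ is infinite, then $\pi_2(P)=0$ by the Sphere Theorem, so the universal cover $\widetilde P$ is a non-compact, simply connected $3$--manifold with $\pi_2=0$, hence contractible; thus $P$, and likewise $Q$, is aspherical. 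Then $P$ and $Q$ are both $K(\pi_1(P),1)$'s and so homotopy equivalent, and by the topological rigidity of closed aspherical $3$--manifolds (a consequence of Geometrization) they are homeomorphic, so in particular $\mathrm{rank}\,\widehat{HF}(P)=\mathrm{rank}\,\widehat{HF}(Q)$. Combined with the reduction above, this yields $\mathrm{rank}\,\widehat{HF}(Y_1)=\mathrm{rank}\,\widehat{HF}(Y_2)$.

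The group theory and the connected sum formula are routine bookkeeping; the substance is the two cases just above, both of which rely on Geometrization. The point worth stressing is that $\pi_1$ does \emph{not} pin down the homeomorphism type of a spherical space form — the lens spaces $L(p,q)$ all have fundamental group $\mathbb Z/p$ — yet it does determine $|H_1|$, which is all that $\mathrm{rank}\,\widehat{HF}$ sees in the elliptic case since those manifolds are L--spaces. I expect the genuinely nontrivial ingredient to be the rigidity of closed aspherical $3$--manifolds, i.e. that an isomorphism of their fundamental groups is realized by a homeomorphism.
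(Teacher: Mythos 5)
Your proposal is correct and is essentially the paper's argument written out in full: the paper simply cites \cite{AFW}*{Theorem~2.1.3} for the statement that an isomorphism of fundamental groups induces a correspondence of prime summands that are either homeomorphic or lens spaces with the same $H_1$, and your combination of prime decomposition, Grushko uniqueness, elliptization, and aspherical rigidity is precisely a proof of the part of that theorem being used, followed by the same Heegaard Floer inputs (multiplicativity of the rank under connected sum and the L--space property of spherical space forms). The only cosmetic difference is that you treat all spherical summands uniformly via $\mathrm{rank}\,\widehat{HF}=|H_1|$ rather than isolating lens spaces as the sole non-rigid case, which slightly streamlines the citation.
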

\begin{proof}
This is a well-known consequence of the Geometrization Theorem. As in \cite[Theorem~2.1.3]{AFW}, if $\pi_1(Y_1)\cong\pi_2(Y_2)$, then there is a one-to-one correspondence between the summands of $Y_1$ and the summands of $Y_2$, such that any pair of summands in the correspondence consists of either homeomorphic manifolds, (the homemorphism does not necessarily preserve the orientation), or lens spaces with the same $H_1$. Our result then follows from basic properties of Heegaard Floer homology \cite{OSzAnn1,OSzAnn2}.
\end{proof}

\begin{proof}[Proof of Theorem~\ref{thm:PropR} when $b_1(Y)=0$]
Without loss of generality, we may assume $Y\setminus K$ is irreducible. 
Assume that $Y_0(K)=Z\#(S^1\times S^2)$. By Proposition~\ref{prop:Pi1Isom}, $\pi_1(Y)\cong\pi_1(Z)$. Using Proposition~\ref{prop:SameRank}, we get $\mathrm{rank}\widehat{HF}(Y)=\mathrm{rank}\widehat{HF}(Z)$. Our conclusion follows from \cite[Theorem~1.1]{HL}.
\end{proof}

\begin{proof}[Proof of Corollary~\ref{cor:Genus1}]
The first statement follows from Theorem~\ref{thm:PropR}. We only need to prove the second statement. In this case, $Y_0(K)$ is a torus bundle over $S^1$. By Lemma~\ref{lem:Retraction}, there exists a degree-one map $f:Y_0(K)\to Y$. Then $f_*$ is surjective.
We claim that $Y$ is $S^1\times S^2,\mathbb RP^3\#\mathbb RP^3$ or a spherical manifold. Then our conclusion follows from \cite{AiNi}.

If $Y$ is reducible, then either $Y=S^1\times S^2$ or $Y$ is a nontrivial connected sum. If $Y=S^1\times S^2$, we are done. Now we assume $Y$ is a nontrivial connected sum, so $\pi_1(Y)\cong A*B$ with $A,B$ nontrivial.
Let $T$ be a fiber of $Y_0(K)$, then $f_*(\pi_1(T))$ is an abelian normal subgroup of $\pi_1(Y)=f_*(\pi_1(Y_0(K)))$.
By the Kurosh Subgroup Theorem \cite[Theorem~8.3]{Hempel}, $f_*(\pi_1(T))$ is also a free product of free groups and conjugates of subgroups of $A,B$. Since $f_*(\pi_1(T))$ is abelian, it must be either a subgroup of $\mathbb Z$ or the conjugate of a subgroup of $A$ or $B$. Since $f_*(\pi_1(T))$ is a normal subgroup of $\pi_1(Y)\cong A*B$, the latter case cannot happen. 

Now $f_*(\pi_1(T))$ is a subgroup of $\mathbb Z$. Since $f_*$ is surjective, $\pi_1(Y)/f_*(\pi_1(T))$ is a cyclic group.
If $f_*(\pi_1(T))=\{1\}$, then $\pi_1(Y)$ is cyclic, a contradiction to $\pi_1(Y)\cong A*B$. So $f_*(\pi_1(T))\cong\mathbb Z$.

 If $\pi_1(Y)/f_*(\pi_1(T))\cong\mathbb Z$, then $\pi_1(Y)$ contains $\mathbb Z^2$ as a finite index subgroup, which is not possible since $\mathbb Z^2$ is not the fundamental group of any closed $3$--manifold. If  $\pi_1(Y)/f_*(\pi_1(T))$ is finite, then $\pi_1(Y)$ contains $\mathbb Z$ as a finite index subgroup. It follows that $Y$ is finitely covered by $S^1\times S^2$, thus it must be $S^1\times S^2$ or $\mathbb RP^3\#\mathbb RP^3$.

Now we consider the case $Y$ is irreducible. If $Y$ is a spherical manifold, we are done. If $Y$ is irreducible and not a spherical manifold, then $f_*$ is injective by \cite[Theorem~4]{Wang}. So $f_*$ is an isomorphism, a contradiction to the fact that $b_1(Y_0(K))>b_1(Y)$.
\end{proof}

\end{document}